\documentclass[11pt,english,reqno]{article}
\usepackage{lmodern}
\usepackage{lmodern}

\usepackage[T1]{fontenc}
\usepackage[latin9]{inputenc}
\usepackage{float}
\usepackage{enumitem}
\usepackage{tipa}
\usepackage{tipx}
\usepackage{amsmath}
\usepackage{amsthm}
\usepackage{amssymb}
\usepackage{graphicx}
\usepackage{color}

\makeatletter

\usepackage{fullpage}
\usepackage{relsize}

\newtheoremstyle{myremark} 
    {\topsep}                    
    {\topsep}                    
    {\rm}                        
    {}                           
    {\bf}                        
    {.}                          
    {.5em}                       
    {}  
    
\theoremstyle{myremark}

\usepackage{listings}
\usepackage{amsmath}
\usepackage{amsthm}
\usepackage{tikz}
\usepackage{caption}
\usepackage{array}
\usepackage{mdwmath}
\usepackage{multirow}
\usepackage{mdwtab}
\usepackage{eqparbox}
\usepackage{amsfonts}
\usepackage{tikz}
\usepackage{multirow,bigstrut,threeparttable}
\usepackage{amsthm}
\usepackage{array}
\usepackage{bbm}
\usepackage{subfigure}
\usepackage{epstopdf}
\usepackage{mdwmath}
\usepackage{mdwtab}
\usepackage{eqparbox}
\usepackage{tikz}
\usepackage{latexsym}
\usepackage{cite}
\usepackage{amssymb}
\usepackage{bm}
\usepackage{amssymb}
\usepackage{graphicx}
\usepackage{mathrsfs}
\usepackage{epsfig}
\usepackage{psfrag}
\usepackage{setspace}
\usepackage{hyperref}
\usepackage{algorithm}
\usepackage{algpseudocode}
\usepackage{stfloats}


\newtheorem{theorem}{Theorem}

\newtheorem{lemma}{Lemma}

\newcommand{\norm}[1]{\left\lVert#1\right\rVert}

\DeclareMathOperator*{\argmax}{arg\,max}
\DeclareMathOperator*{\esssup}{ess\,sup}

\begin{document}

\title{Generalizations of Maximal Inequalities to Arbitrary Selection Rules}


\author{Jiantao Jiao, Yanjun Han, and Tsachy Weissman
\thanks{Jiantao Jiao, Yanjun Han, and Tsachy Weissman are with the Department of Electrical Engineering, Stanford University, CA, USA. Email: \{jiantao,yjhan, tsachy\}@stanford.edu.}
}

\date{\today}


%
\maketitle

\begin{abstract}
We present a generalization of the maximal inequalities that upper bound the expectation of the maximum of $n$ jointly distributed random variables. We control the expectation of a randomly selected random variable from $n$ jointly distributed random variables, and present bounds that are at least as tight as the classical maximal inequalities, and much tighter when the distribution of selection index is near deterministic. A new family of information theoretic measures were introduced in the process, which may be of independent interest.
\end{abstract}
%

\section{Introduction}
Throughout this paper, we consider $n$ random variables $Z_i, 1\leq i\leq n$ such that $\mathbb{E}[Z_i] = 0$, where $n$ is a finite positive integer. The zero mean condition can be satisfied via the operation $Z_i' = Z_i - \mathbb{E}[Z_i]$ upon assuming that all $Z_i$'s are integrable. The following two maximal inequalities are well known in the literature and serve as the motivational results for this work.
\begin{lemma}\label{lemma.mgfhard}
Let $\psi \geq 0$ be a convex function defined on the interval $[0,b)$ where $0<b\leq \infty$. Assume that $\psi(0) = 0$. Set, for every $t\geq 0$,
\begin{align}
\psi^*(t) & = \sup_{\lambda \in (0,b)} (\lambda t - \psi(\lambda)).
\end{align}
Suppose that $\ln \mathbb{E}[e^{\lambda Z_i}] \leq \psi(\lambda)$ for all $\lambda \in [0,b), 1\leq i\leq n$. Then,
\begin{align}
\mathbb{E}[\max_i Z_i] & \leq \psi^{*-1}(\ln n),
\end{align}
where $\psi^{*-1}(y)$ is defined as
\begin{align}
\psi^{*-1}(y) = \inf \{t\geq 0: \psi^*(t)>y\}.
\end{align}
\end{lemma}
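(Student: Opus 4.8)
The plan is to follow the classical Cram\'er--Chernoff route: exponentiate, apply Jensen's inequality, bound the maximum by a sum to exploit the moment generating function hypothesis, and finally optimize the free parameter $\lambda$, recognizing the optimized bound as the generalized inverse $\psi^{*-1}(\ln n)$. The probabilistic content is standard; the substance lies in the concluding duality identity between the Legendre transform $\psi^*$ and its generalized inverse.

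First I would fix an arbitrary $\lambda \in (0,b)$ and use the convexity of $x \mapsto e^{\lambda x}$ together with Jensen's inequality to write
\begin{align}
\exp\left(\lambda \, \mathbb{E}[\max_i Z_i]\right) \leq \mathbb{E}\left[\exp\left(\lambda \max_i Z_i\right)\right] = \mathbb{E}\left[\max_i e^{\lambda Z_i}\right].
\end{align}
Since each exponential is nonnegative, the maximum is dominated by the sum, and the hypothesis $\ln \mathbb{E}[e^{\lambda Z_i}] \leq \psi(\lambda)$ gives
\begin{align}
\mathbb{E}\left[\max_i e^{\lambda Z_i}\right] \leq \sum_{i=1}^n \mathbb{E}[e^{\lambda Z_i}] \leq n \, e^{\psi(\lambda)}.
\end{align}
Taking logarithms and dividing by $\lambda > 0$ then yields, for every $\lambda \in (0,b)$,
\begin{align}
\mathbb{E}[\max_i Z_i] \leq \frac{\ln n + \psi(\lambda)}{\lambda},
\end{align}
and hence $\mathbb{E}[\max_i Z_i] \leq \inf_{\lambda \in (0,b)} \frac{\ln n + \psi(\lambda)}{\lambda}$.

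The remaining and most delicate step is to identify this infimum with $\psi^{*-1}(\ln n)$; this is a statement purely about the Legendre transform and its generalized inverse. I would argue through the chain of equivalences
\begin{align}
\psi^*(t) \leq \ln n \quad\Longleftrightarrow\quad \lambda t - \psi(\lambda) \leq \ln n \ \ \forall \lambda \in (0,b) \quad\Longleftrightarrow\quad t \leq \frac{\ln n + \psi(\lambda)}{\lambda} \ \ \forall \lambda \in (0,b),
\end{align}
so that $\psi^*(t) \leq \ln n$ holds exactly when $t \leq \inf_{\lambda \in (0,b)} \frac{\ln n + \psi(\lambda)}{\lambda}$. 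Because $\psi^*$ is nondecreasing on $[0,\infty)$, being a supremum of the increasing affine maps $t \mapsto \lambda t - \psi(\lambda)$, the threshold $\inf\{t \geq 0 : \psi^*(t) > \ln n\}$ defining $\psi^{*-1}(\ln n)$ coincides with the supremum of those $t$ for which $\psi^*(t) \leq \ln n$, which is precisely the infimum above.

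The step I expect to require the most care is this final identification. One must confirm the monotonicity of $\psi^*$ so that the sub-level set $\{t : \psi^*(t) \leq \ln n\}$ is an interval anchored at the origin, track whether the defining inequality in $\psi^{*-1}$ is strict or not so that the threshold matches, and handle the edge cases where the infimum over $\lambda$ is not attained or the domain boundary $b$ is finite. By contrast, the first three displays are routine Cram\'er--Chernoff bookkeeping, so the analytic duality between $\psi^*$ and $\psi^{*-1}$ is where the argument's real work is concentrated.
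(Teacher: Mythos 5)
Your proof is correct and follows the same Cram\'er--Chernoff route as the paper: Jensen's inequality, bounding the maximum by the sum, and optimizing over $\lambda$ to get $\mathbb{E}[\max_i Z_i] \leq \inf_{\lambda \in (0,b)} \frac{\ln n + \psi(\lambda)}{\lambda}$. The only difference is that the paper cites Boucheron--Lugosi--Massart (Lemma 2.4) for the final identity $\psi^{*-1}(y) = \inf_{\lambda \in (0,b)} \frac{y + \psi(\lambda)}{\lambda}$, whereas you prove it directly via the sub-level-set equivalence and monotonicity of $\psi^*$, which is a valid and self-contained substitute.
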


To introduce the second inequality, we say a function $\psi$ is an Orlicz function if $\psi: [0,\infty) \mapsto [0,\infty]$ is a convex function vanishing at zero and is also not identically $0$ or $\infty$ on $(0,\infty)$. We define the Luxemburg $\psi$ norm of a random variable $X$ as
\begin{align}
\norm{X}_\psi = \inf \left \{ \sigma>0: \mathbb{E}\left[ \psi \left( \frac{|X|}{\sigma} \right) \right] \leq 1 \right \}.
\end{align}
\begin{lemma}\label{lemma.orliczhard}
\cite{pollardonline2016} Let $\psi$ be an Orlicz function. Suppose $\norm{ Z_i}_\psi \leq \sigma, 1\leq i\leq n$. Then,
\begin{align}
\mathbb{E}[\max_i Z_i] & \leq  \sigma \cdot \psi^{-1}(n),
\end{align}
where $\psi^{-1}(y)$ is defined as $\psi^{-1}(y) = \inf \{t\geq 0: \psi(t)> y\}$.
\end{lemma}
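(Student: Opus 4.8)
The plan is to reduce the maximal inequality to a single application of Jensen's inequality followed by inverting $\psi$ through its monotonicity. First I would record the elementary consequence of the hypothesis $\norm{Z_i}_\psi \leq \sigma$: by the definition of the Luxemburg norm (and monotone convergence at the boundary, since a convex $\psi$ is continuous on the interior of its domain), one has $\mathbb{E}[\psi(|Z_i|/\sigma)] \leq 1$ for every $i$. I would also note the two structural facts about an Orlicz function that the whole argument rests on: being convex, nonnegative, and vanishing at $0$, the function $\psi$ is nondecreasing on $[0,\infty)$, and being convex it obeys Jensen's inequality.

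If $\mathbb{E}[\max_i Z_i] \leq 0$ the claimed bound is immediate because $\sigma > 0$ and $\psi^{-1}(n) \geq 0$, so I may assume $\mathbb{E}[\max_i Z_i] \geq 0$ and apply $\psi$ to $\mathbb{E}[\max_i Z_i]/\sigma$. The core chain of inequalities is then
\begin{align}
\psi\left(\frac{\mathbb{E}[\max_i Z_i]}{\sigma}\right)
&\leq \psi\left(\mathbb{E}\left[\frac{\max_i |Z_i|}{\sigma}\right]\right)
\leq \mathbb{E}\left[\psi\left(\frac{\max_i |Z_i|}{\sigma}\right)\right] \\
&= \mathbb{E}\left[\max_i \psi\left(\frac{|Z_i|}{\sigma}\right)\right]
\leq \sum_{i=1}^n \mathbb{E}\left[\psi\left(\frac{|Z_i|}{\sigma}\right)\right] \leq n,
\end{align}
where the first step uses monotonicity of $\psi$ together with $\max_i Z_i \leq \max_i |Z_i|$, the second is Jensen, the equality uses that a nondecreasing $\psi$ commutes with the maximum, the penultimate step bounds a maximum of nonnegative terms by their sum, and the last step is the Luxemburg bound.

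It remains to invert, and this is the step that needs the most care, because $\psi^{-1}$ is the generalized (quantile-type) inverse $\psi^{-1}(n) = \inf\{t \geq 0 : \psi(t) > n\}$ and $\psi$ may be neither strictly increasing nor continuous. I would argue by contradiction: writing $s = \mathbb{E}[\max_i Z_i]/\sigma$, suppose $s > \psi^{-1}(n)$. Then by the definition of the infimum there is a $t$ with $\psi^{-1}(n) \leq t < s$ and $\psi(t) > n$; monotonicity of $\psi$ gives $\psi(s) \geq \psi(t) > n$, contradicting $\psi(s) \leq n$ established above. Hence $s \leq \psi^{-1}(n)$, that is, $\mathbb{E}[\max_i Z_i] \leq \sigma \cdot \psi^{-1}(n)$. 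The main obstacle throughout is the bookkeeping around edge cases: justifying $\mathbb{E}[\psi(|Z_i|/\sigma)] \leq 1$ when the infimum defining the norm is attained only in the limit, and making the inversion rigorous for a possibly discontinuous $\psi$; the probabilistic content is entirely captured by Jensen and the union-type bound of the maximum by the sum.
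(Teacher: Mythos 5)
Your proof is correct and follows essentially the same route as the paper's: bound $\psi(\mathbb{E}[\max_i |Z_i|]/\sigma)$ via Jensen, pull the maximum inside $\psi$ by monotonicity, replace the maximum of nonnegative terms by their sum, and invert using the generalized inverse. The extra care you take with the edge cases (attainment of the Luxemburg infimum, the non-strictly-increasing inverse) is welcome but does not change the argument.
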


This paper generalizes Lemma~\ref{lemma.mgfhard} and~\ref{lemma.orliczhard} to arbitrary selection rules. Concretely, suppose $T\in \{1,2,\ldots,n\}$ is a random variable jointly distributed with $Z_1,Z_2,\ldots,Z_n$. We would like to upper bound $\mathbb{E}[Z_T]$, which subsumes the maximal inequality $T = \argmax_i Z_i$ as a special case. Naturally, since
\begin{align}
\mathbb{E}[Z_T] & \leq \mathbb{E}[\max_i Z_i],
\end{align}
we would like to obtain bounds that are at least as strong as Lemma~\ref{lemma.mgfhard} and~\ref{lemma.orliczhard}, but dependent on the joint distribution of $T, Z_1,Z_2,\ldots,Z_n$. In particular, the upper bound should be zero if $T$ is deterministic since we have already assumed that $\mathbb{E}[Z_i] = 0$ for all $1\leq i\leq n$.

A generalization of Lemma~\ref{lemma.mgfhard} was achieved in~\cite{jiao--han--weissman17dependence} using the Donsker--Varadhan representation of the relative entropy, which is a generalization of the sub-Gaussian case in~\cite{Russo--Zou2015much}. Denote the entropy of a discrete random variable $T$ as
\begin{align}
H(T) & = \sum_{t} P_T(t) \ln \frac{1}{P_T(t)},
\end{align}
and the mutual information $I(X;Y)$ between $X$ and $Y$ as
\begin{align}
I(X;Y) = \begin{cases} \int \ln \frac{dP_{XY}}{d(P_XP_Y)} dP_{XY} & \text{if } P_{XY} \ll P_XP_Y \\ \infty & \text{otherwise} \end{cases}.
\end{align}
The following was shown in~\cite{jiao--han--weissman17dependence}.
\begin{lemma}\label{lemma.mgfsoft}
Let $\psi \geq 0$ be a convex function defined on the interval $[0,b)$ where $0<b\leq \infty$. Assume that $\psi(0) = 0$. Set, for every $t\geq 0$,
\begin{align}
\psi^*(t) & = \sup_{\lambda \in (0,b)} (\lambda t - \psi(\lambda)).
\end{align}
Suppose that $\ln \mathbb{E}[e^{\lambda Z_i}] \leq \psi(\lambda)$ for all $\lambda \in [0,b), 1\leq i\leq n$, and $\mathbb{E}[Z_i] = 0, 1\leq i\leq n$. Then,
\begin{align}
\mathbb{E}[Z_T] & \leq \psi^{*-1}(I(T;\mathbf{Z})) \\
& \leq \psi^{*-1}(H(T)) \\
\end{align}
where $\psi^{*-1}(y)$ is defined as
\begin{align}
\psi^{*-1}(y) = \inf \{t\geq 0: \psi^*(t)>y\}.
\end{align}
and $\mathbf{Z} = (Z_1,Z_2,\ldots,Z_n)$.
\end{lemma}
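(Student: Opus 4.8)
The plan is to derive the bound from the Donsker--Varadhan variational representation of relative entropy, applied to the joint law $P_{T,\mathbf{Z}}$ and the product law $P_T P_{\mathbf{Z}}$, using $Z_T$ (viewed as the function $(T,\mathbf{Z})\mapsto Z_T$ that reads off the $T$-th coordinate) as a test function. Recall that for two measures $P,Q$ and any function $f$ with finite exponential moment under $Q$, the representation yields $\mathbb{E}_P[f] \le D(P\,\|\,Q) + \ln \mathbb{E}_Q[e^{f}]$. Taking $P = P_{T,\mathbf{Z}}$, $Q = P_T P_{\mathbf{Z}}$, and $f = \lambda Z_T$ for a fixed $\lambda \in (0,b)$, the relative entropy term is exactly $I(T;\mathbf{Z})$, so I would obtain $\lambda\, \mathbb{E}[Z_T] \le I(T;\mathbf{Z}) + \ln \mathbb{E}_{P_T P_{\mathbf{Z}}}[e^{\lambda Z_T}]$.

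The crucial step is to control the exponential moment of $Z_T$ under the \emph{product} measure, which is where the hypothesis enters. Because $T$ is independent of $\mathbf{Z}$ under $P_T P_{\mathbf{Z}}$, the expectation factorizes as $\mathbb{E}_{P_T P_{\mathbf{Z}}}[e^{\lambda Z_T}] = \sum_{t} P_T(t)\, \mathbb{E}[e^{\lambda Z_t}]$, where each inner expectation is taken under the true marginal of $Z_t$. Invoking $\ln\mathbb{E}[e^{\lambda Z_t}] \le \psi(\lambda)$ for each $t$, every summand is at most $e^{\psi(\lambda)}$, and since the $P_T(t)$ sum to one the whole quantity is bounded by $e^{\psi(\lambda)}$. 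Substituting back yields the one-parameter family $\lambda\, \mathbb{E}[Z_T] - \psi(\lambda) \le I(T;\mathbf{Z})$, valid for every $\lambda \in (0,b)$.

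It remains to translate this family into the stated closed form. Taking the supremum over $\lambda \in (0,b)$ converts the left-hand side into the Fenchel conjugate evaluated at the mean, giving $\psi^*(\mathbb{E}[Z_T]) \le I(T;\mathbf{Z})$ (I may first reduce to $\mathbb{E}[Z_T]\ge 0$, since otherwise the claim is immediate because the right-hand side is nonnegative). Using that $\psi^*$ is nondecreasing on $[0,\infty)$, the inequality $\psi^*(t)\le y$ forces $t \le \psi^{*-1}(y)$ directly from the definition $\psi^{*-1}(y)=\inf\{s\ge 0 : \psi^*(s)>y\}$; applied with $t = \mathbb{E}[Z_T]$ and $y = I(T;\mathbf{Z})$ this is precisely the first displayed inequality. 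The second follows from $I(T;\mathbf{Z}) = H(T) - H(T\mid \mathbf{Z}) \le H(T)$ for discrete $T$, together with monotonicity of $\psi^{*-1}$.

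The main obstacle, and the conceptual heart of the argument, is the second paragraph: recognizing that evaluating the exponential moment under the product measure $P_T P_{\mathbf{Z}}$ rather than the true joint law decouples index selection from the $Z_i$'s, so that the per-coordinate moment generating function hypothesis applies uniformly and the selection distribution $P_T$ integrates out to a constant. One should also dispatch the degenerate cases cleanly: if $I(T;\mathbf{Z})=\infty$ (in particular when $P_{T,\mathbf{Z}}\not\ll P_T P_{\mathbf{Z}}$) the bound is vacuous, so the Donsker--Varadhan inequality need only be invoked when the relative entropy is finite.
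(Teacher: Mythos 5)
Your argument is correct and follows exactly the route the paper indicates for this result: the Donsker--Varadhan representation applied to $P_{T,\mathbf{Z}}$ versus $P_T P_{\mathbf{Z}}$ with test function $\lambda Z_T$, factorizing the exponential moment under the product measure, and inverting the resulting Fenchel-type inequality (the paper itself only cites \cite{jiao--han--weissman17dependence} for the proof, but explicitly describes this same Donsker--Varadhan approach). No gaps; the handling of the degenerate cases $\mathbb{E}[Z_T]<0$ and $I(T;\mathbf{Z})=\infty$ is also done correctly.
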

Lemma~\ref{lemma.mgfsoft} is clearly stronger than Lemma~\ref{lemma.mgfhard} since $I(T;\mathbf{Z})  \leq H(T) \leq \ln n$. It is also interesting to observe that the soft bound is maximized when $T$ follows a uniform distribution, and it is zero when $T$ is deterministic.

Similar attempts were made to generalize Lemma~\ref{lemma.orliczhard} in~\cite{jiao--han--weissman17dependence}. However, it was not satisfactory since that even in the case of $\psi(x) = x^p, p\geq 1, x\geq 0$, the generalization bound obtained in~\cite{jiao--han--weissman17dependence} may be infinity when $1\leq p<2$, while Lemma~\ref{lemma.orliczhard} shows that it is universally bounded by $\sigma \cdot n^{1/p}$ for every $p\geq 1$.

Our main contribution in this paper is the generalization of Lemma~\ref{lemma.orliczhard} to arbitrary selection rules. Our generalization satisfies the following properties:
\begin{enumerate}
\item It is at least as strong as Lemma~\ref{lemma.orliczhard}: in other words, it can be shown that the worst case joint distribution of $T$ and $\mathbf{Z}$ would not incur an upper bound larger than $\sigma \cdot \psi^{-1}(n)$, which is the upper bound in Lemma~\ref{lemma.orliczhard}.
\item It admits a closed form expression for the $p$-norm case, i.e., the case where $\psi(x) = x^p, p\geq 1,x\geq 0$. In other words, it defines another information theoretic measure paralleling the Shannon entropy $H(T)$ in Lemma~\ref{lemma.mgfsoft}. Concretely, for any $1\leq q\leq \infty$, we introduce functional $H(T;q)$ as
\begin{align}\label{eqn.hqdef}
H(T;q) & = \begin{cases} \frac{1}{2}\mathbbm{1}(H(T)\neq 0) & q = \infty \\
 \left( \sum_{t} \left( P_T(t)^{1/(1-q)} + (1-P_T(t))^{1/(1-q)} \right)^{1-q}  \right)^{1/q} & 1<q<\infty \\ \sum_t \min\{P_T(t), 1-P_T(t)\} & q = 1 \end{cases},
\end{align}
where $\mathbbm{1}(A) = \begin{cases} 1 & A\text{ is true }\\ 0 & \text{otherwise} \end{cases}$, and $H(T)$ is the Shannon entropy functional. The $H(T;q)$ functional satisfies the following properties:
\begin{enumerate}
\item $0\leq H(T;q) \leq 1$;
\item $H(T;q) = 0 \Leftrightarrow T$ is deterministic.
\end{enumerate}
\end{enumerate}

The rest of the paper is organized as follows. We present and discuss our main results in Section~\ref{sec.mainresults}. Auxiliary lemmas and their proofs are provided in Section~\ref{sec.auxiliary}, and the proofs of Lemma~\ref{lemma.mgfhard} and~\ref{lemma.orliczhard} are provided in Section~\ref{sec.classicalproof} for completeness.

\section{Preliminaries}
The $\beta$-norm of a random variable $X$ for $\beta\geq 1$ is defined as
\begin{align}
\norm{X}_\beta = \begin{cases}  (\mathbb{E} |X|^\beta)^{1/\beta} & 1\leq \beta <\infty \\
\esssup |X| & \beta = \infty \end{cases},
\end{align}
where the essential supremum is defined as
\begin{align}
\esssup X = \inf\{M: \mathbb{P}(X>M) = 0\}.
\end{align}

The Fenchel--Young inequality states that for any function $f$ and its convex conjugate $f^*$, we have
\begin{align}
f(x) + f^*(y) \geq \langle x, y \rangle, \textrm{ for all }x\in X, y\in X^*,
\end{align}
which follows from the definition of convex conjugate $f^*(y) = \sup_{x\in X} \{ \langle x, y\rangle - f(x) \}$. It follows from the Fenchel--Moreau theorem that $f = f^{**}$ if and only if $f$ is convex and lower semi-continuous. Note that any convex function $f: [0,\infty) \mapsto [0,\infty]$ that satisfies $f(0) = 0$ is lower semi-continuous.

We define the Ameniya norm of a random variable $X$ as
\begin{align}
\norm{X}_\psi^A = \inf \left \{ \frac{1 + \mathbb{E} \psi(|tX|)}{t}: t>0 \right \}.
\end{align}

\section{Main results}\label{sec.mainresults}

We present our main result below.
\begin{theorem}\label{thm.main}
Let $\psi$ be an Orlicz function. Suppose $\norm{Z_i}_\psi \leq \sigma, \mathbb{E}[Z_i]  = 0, 1\leq i\leq n$. Then,
\begin{align}
|\mathbb{E}[Z_T]| & \leq \sigma \cdot \sum_{i = 1}^n \inf_{a_i} \norm{P_{T|\mathbf{Z}}(i|\mathbf{Z}) - a_i}_{\psi^*}^A \\
& \leq \sigma \cdot \inf_{t>0} \frac{1}{t} \left( n + \sum_{i = 1}^n P_T(i)\psi^*(t|1-a_i|) + (1-P_T(i)) \psi^*(t|a_i|) \right).
\end{align}
Furthermore, if $\norm{Z_i}_p \leq \sigma, p\geq 1, \frac{1}{p} + \frac{1}{q} = 1$, then
\begin{align}
|\mathbb{E}[Z_T]| & \leq  \sigma \cdot n^{1/p} \left(  \sum_{i = 1}^n  \inf_{a_i\in \mathbb{R}}\mathbb{E}|P_{T|\mathbf{Z}}(i|\mathbf{Z}) - a_i|^q \right)^{1/q} \\
& \leq \sigma \cdot n^{1/p} H(T;q) ,
\end{align}
where $\mathbf{Z} = (Z_1,Z_2,\ldots,Z_n)$, and $H(T;q)$ is defined in~(\ref{eqn.hqdef}).
\end{theorem}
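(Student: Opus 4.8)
The plan is to reduce the whole statement to a Hölder-type duality between the Luxemburg norm $\norm{\cdot}_\psi$ and the Amemiya norm $\norm{\cdot}_{\psi^*}^A$, and then, in the $p$-norm case, to carry out an explicit scalar optimization. First I would write $\mathbb{E}[Z_T] = \sum_{i=1}^n \mathbb{E}[Z_i\mathbbm{1}(T=i)]$ and condition on $\mathbf{Z}$. Since each $Z_i$ is a function of $\mathbf{Z}$ and $\mathbb{E}[\mathbbm{1}(T=i)\mid\mathbf{Z}] = P_{T|\mathbf{Z}}(i|\mathbf{Z})$, the tower property gives $\mathbb{E}[Z_i\mathbbm{1}(T=i)] = \mathbb{E}[Z_i P_{T|\mathbf{Z}}(i|\mathbf{Z})]$. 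The structural observation that drives everything is that $\mathbb{E}[Z_i]=0$ lets me subtract an arbitrary constant $a_i$, so that $\mathbb{E}[Z_i P_{T|\mathbf{Z}}(i|\mathbf{Z})] = \mathbb{E}[Z_i(P_{T|\mathbf{Z}}(i|\mathbf{Z})-a_i)]$; this centering freedom is exactly what forces the final bound to vanish when $T$ is deterministic.

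Next I would derive the Orlicz--Hölder inequality $\mathbb{E}|XY| \le \norm{X}_\psi\,\norm{Y}_{\psi^*}^A$ from Fenchel--Young, $\psi(u)+\psi^*(v)\ge uv$. After normalizing so that $\mathbb{E}\psi(|X|)\le 1$, for every $t>0$ one has $t|XY| \le \psi(|X|)+\psi^*(t|Y|)$ pointwise, hence $\mathbb{E}|XY| \le t^{-1}(1+\mathbb{E}\psi^*(t|Y|))$, and taking the infimum over $t$ produces the Amemiya norm. Applying this with $X=Z_i$ and $Y=P_{T|\mathbf{Z}}(i|\mathbf{Z})-a_i$, using $\norm{Z_i}_\psi\le\sigma$, summing over $i$ via the triangle inequality, and optimizing over each $a_i$ yields the first displayed bound. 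For the second bound I would expand the Amemiya norm using a single common $t$, legitimate because $\sum_i\inf_{t_i}(\cdot)\le\inf_t\sum_i(\cdot)$, and then use that $x\mapsto\psi^*(t|x-a_i|)$ is convex on $[0,1]$, being the nondecreasing convex $\psi^*$ composed with a convex function. Evaluating convexity at $P=(1-P)\cdot 0+P\cdot 1$ gives the pointwise bound $\psi^*(t|P-a_i|)\le(1-P)\psi^*(t|a_i|)+P\,\psi^*(t|1-a_i|)$ with $P=P_{T|\mathbf{Z}}(i|\mathbf{Z})\in[0,1]$; taking expectations and using $\mathbb{E}[P_{T|\mathbf{Z}}(i|\mathbf{Z})]=P_T(i)$ completes it.

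For the $p$-norm specialization I would use that $\psi(x)=x^p$ makes the Luxemburg norm equal to $\norm{\cdot}_p$, dual-paired with $\norm{\cdot}_q$, so ordinary Hölder gives $|\mathbb{E}[Z_i(P_{T|\mathbf{Z}}(i|\mathbf{Z})-a_i)]|\le\sigma(\mathbb{E}|P_{T|\mathbf{Z}}(i|\mathbf{Z})-a_i|^q)^{1/q}$; summing over $i$ and applying Hölder to the finite sum with exponents $p,q$ extracts the factor $n^{1/p}$ and produces the third bound. The last step is the identification with $H(T;q)$, for which it suffices to prove the term-by-term inequality $\inf_a\mathbb{E}|P_{T|\mathbf{Z}}(i|\mathbf{Z})-a|^q\le(P_T(i)^{1/(1-q)}+(1-P_T(i))^{1/(1-q)})^{1-q}$. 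Writing $P=P_{T|\mathbf{Z}}(i|\mathbf{Z})\in[0,1]$ with mean $p_i:=P_T(i)$, convexity of $x\mapsto|x-a|^q$ gives $\mathbb{E}|P-a|^q\le(1-p_i)|a|^q+p_i|1-a|^q$, reducing to the Bernoulli worst case; minimizing $g(a)=(1-p_i)a^q+p_i(1-a)^q$ over $a\in[0,1]$ through the stationarity condition $(a/(1-a))^{q-1}=p_i/(1-p_i)$ and simplifying yields exactly $(p_i^{1/(1-q)}+(1-p_i)^{1/(1-q)})^{1-q}$. Summation and a $q$-th root then give $\le H(T;q)$. The boundary exponents are handled separately: for $q=1$ the minimizer of $(1-p_i)|a|+p_i|1-a|$ sits at an endpoint and returns $\min\{p_i,1-p_i\}$, summing to $H(T;1)$; for $q=\infty$ each $\inf_a\norm{P-a}_\infty\le\tfrac12$, vanishing precisely when $P$ is a.s.\ constant, so the $n$ terms sum to at most $n\cdot\tfrac12\mathbbm{1}(H(T)\ne0)=n^{1/p}H(T;\infty)$.

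I expect the main obstacle to be this final step. Both the reduction to the Bernoulli worst case, which must be established uniformly in $a$ before the infimum is taken, and the explicit optimization, whose output must be algebraically massaged into the exponent form $(\cdot)^{1-q}$, require care; a secondary subtlety is the degenerate treatment of the two boundary exponents $q\in\{1,\infty\}$, where the closed form of $H(T;q)$ changes shape and must be matched to the corresponding limiting optimization. A minor technical point along the way is justifying that the Luxemburg-normalized variable satisfies $\mathbb{E}\psi(|X|/\norm{X}_\psi)\le 1$, which follows from lower semicontinuity of $\psi$ noted in the preliminaries.
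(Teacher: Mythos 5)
Your proposal is correct and follows essentially the same route as the paper: center each term using $\mathbb{E}[Z_i]=0$, apply the Orlicz--H\"older inequality with the Amemiya norm, reduce to the Bernoulli worst case by convexity, and finish with the explicit scalar optimization (the paper's Lemma~\ref{lemma.optimization}). The only cosmetic difference is that you condition on the full vector $\mathbf{Z}$ at the outset and land directly on $P_{T|\mathbf{Z}}(i|\mathbf{Z})$, whereas the paper first obtains the (slightly tighter) bound in terms of $P_{T|Z_i}(i|Z_i)$ and then relaxes it via conditional Jensen; both yield the stated theorem.
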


\begin{proof}
For any $t>0$, we have the following chain of inequalities:
\begin{align}
\mathbb{E}\left[ \frac{Z_T}{\sigma} \right] & = \sum_{i = 1}^n P_T(i) \mathbb{E}\left[ \frac{Z_i}{\sigma} \Bigg | T = i\right]\\
& = \sum_{i = 1}^n P_T(i) \int \frac{P_{Z_i|T = i}(dx)}{P_{Z_i}(dx)} \frac{x}{\sigma} P_{Z_i}(dx) \\
& = \sum_{i = 1}^n P_T(i) \int \left( \frac{P_{Z_i|T = i}(dx)}{P_{Z_i}(dx)} -b_i \right) \frac{x}{\sigma} P_{Z_i}(dx) \\
& = \sum_{i = 1}^n  \int \left( \frac{P_{Z_i,T = i}(dx)}{P_{Z_i}(dx)} -b_i P_T(i) \right) \frac{x}{\sigma} P_{Z_i}(dx) \\
& \leq \sum_{i = 1}^n \int \left | P_{T|Z_i}(i|x) - a_i \right | \frac{|x|}{\sigma} P_{Z_i}(dx),
\end{align}
where $a_i = b_i P_T(i)$, and the vectors $(a_1,a_2,\ldots,a_n)^T$ and $(b_1,b_2,\ldots,b_n)^T$ are deterministic vectors in $\mathbb{R}^n$. The derivations above hold for any arbitrary vector $(a_1,a_2,\ldots,a_n)^T \in \mathbb{R}^n$.

Applying the generalized Holder's inequality, we obtain that
\begin{align}
\mathbb{E}\left[ \frac{Z_T}{\sigma} \right] & \leq \sum_{i = 1}^n \inf_{a_i\in \mathbb{R}} \norm{P_{T|Z_i}(i|Z_i)-a_i}_{\psi^*}^A.
\end{align}

We further upper bound each term in the summation as follows. For each $i, 1\leq i\leq n$,
\begin{align}
\norm{ P_{T|Z_i}(i|Z_i) - a_i }_{\psi^*}^A & = \inf_{t>0} \frac{1 + \mathbb{E}[\psi^*(t|P_{T|Z_i}(i|Z_i) - a_i|)]}{t} \\
& \leq \inf_{t>0} \frac{1 + \mathbb{E}[\psi^*(t|P_{T|\mathbf{Z}}(i|\mathbf{Z}) - a_i|)]}{t} \\
& = \norm{ P_{T|\mathbf{Z}}(i|\mathbf{Z}) - a_i }_{\psi^*}^A.
\end{align}
Here in the second step we have used the fact that $\psi^*(t|x|)$ is a convex function of $x$, and the fact that $\mathbb{E}[P_{T|\mathbf{Z}}(i|\mathbf{Z}) | Z_i] = P_{T|Z_i}(i|Z_i)$.

Hence, we have proved that
\begin{align}\label{eqn.firstbound}
\mathbb{E}[Z_T] & \leq \sigma \cdot \sum_{i = 1}^n \inf_{a_i} \norm{ P_{T|\mathbf{Z}}(i|\mathbf{Z}) - a_i }_{\psi^*}^A.
\end{align}
It is clear that the inequality above also holds for $-\mathbb{E}\left[ Z_T \right]$. Hence, one has
\begin{align}
|\mathbb{E}[Z_T]| & \leq \sigma \cdot \sum_{i = 1}^n \inf_{a_i} \norm{ P_{T|\mathbf{Z}}(i|\mathbf{Z}) - a_i }_{\psi^*}^A.
\end{align}

We now further upper bound the RHS of~(\ref{eqn.firstbound}) to obtain a bound that only depends on the marginal distribution of $T$ but not the joint distribution of $T$ and $\mathbf{Z}$. For any $t>0,a_1,a_2,\ldots,a_n\in \mathbb{R}$, we have
\begin{align}
\mathbb{E}[Z_T] & \leq \sigma \cdot \frac{1}{t} \left( n + \mathbb{E} \left[ \sum_{i = 1}^n \psi^*(t|P_{T|\mathbf{Z}}(i|\mathbf{Z})-a_i|) \right] \right).
\end{align}
Since $\psi^*(t|x-a|)$ is a convex function of $x$ when $t>0$, for any $x\in [0,1]$,
\begin{align}
\psi^*(t|x-a|) & = \psi^*(t|x\cdot 1 + (1-x) \cdot 0 - a|) \\
& \leq x \psi^*(t|1-a|) + (1-x) \psi^*(t|a|).
\end{align}

Applying the inequality above, we have
\begin{align}\label{eqn.generalmarginalt}
\mathbb{E}[Z_T]  & \leq \sigma \cdot \inf_{t>0} \frac{1}{t} \left( n + \sum_{i = 1}^n P_T(i)\psi^*(t|1-a_i|) + (1-P_T(i)) \psi^*(t|a_i|) \right).
\end{align}

Now, we present the results pertaining to the $p$-norm, which corresponds to $\psi(x) = x^p, p\geq 1, x\geq 0$.

When $p = 1$, $\psi^*(y) = \begin{cases} 0 & y\in [0,1] \\ \infty & y>1 \end{cases}$. Hence, if $T$ is not deterministic, it follows from~(\ref{eqn.generalmarginalt}) that
\begin{align}
\mathbb{E}[Z_T] & \leq \sigma \inf_{a_1,a_2,\ldots,a_n \in \mathbb{R}} n \max\{|a_i|,|1-a_i|\} \\
& \leq \sigma \cdot \frac{n}{2}.
\end{align}
When $T$ is deterministic, we have $|\mathbb{E}[Z_T]| = 0$.

Now we consider the case of $p>1$. We have that
\begin{align}
\mathbb{E}[Z_T] & \leq \sigma \sum_{i = 1}^n \inf_{a_i} \norm{ P_{T|\mathbf{Z}}(i|\mathbf{Z}) - a_i  }_q,
\end{align}
since $\norm{ X}_{\psi^*}^A = \norm{X}_q$, where $\frac{1}{p} + \frac{1}{q} = 1$. Concretely,
\begin{align}
\sum_{i = 1}^n \norm{ P_{T|\mathbf{Z}}(i|\mathbf{Z}) - a_i  }_q & = \sum_{i = 1}^n \left( \mathbb{E}|P_{T|\mathbf{Z}}(i|\mathbf{Z}) - a_i|^q \right)^{1/q} \\
& = n \cdot \sum_{i = 1}^n \frac{1}{n}  \left( \mathbb{E}|P_{T|\mathbf{Z}}(i|\mathbf{Z}) - a_i|^q \right)^{1/q} \\ \
& \leq n \left( \frac{1}{n}  \sum_{i = 1}^n \mathbb{E}|P_{T|\mathbf{Z}}(i|\mathbf{Z}) - a_i|^q \right)^{1/q} \\
& = n^{1/p} \left(  \sum_{i = 1}^n \mathbb{E}|P_{T|\mathbf{Z}}(i|\mathbf{Z}) - a_i|^q \right)^{1/q},
\end{align}
where we have used the fact that $x^{1/q}, x\geq 0$ is a concave function.

It follows from Jensen's inequality that for any $x\in [0,1],q\geq 1,a\in \mathbb{R}$, we have
\begin{align}
|x-a|^q & = |x \cdot 1 + (1-x) \cdot 0 - a|^q \\
& \leq x |1-a|^q + (1-x) |a|^q,
\end{align}
and the inequality is tight when $x=1$ or $x = 0$. Applying the inequality above, we have
\begin{align}
\mathbb{E}|P_{T|\mathbf{Z}}(i|\mathbf{Z}) - a_i|^q & \leq \mathbb{E}\left[ P_{T|\mathbf{Z}}(i|\mathbf{Z}) |1-a_i|^q + (1-P_{T|\mathbf{Z}}(i|\mathbf{Z})) |a_i|^q \right] \\
& = P_T(i)|1-a_i|^q + (1-P_T(i))|a_i|^q.
\end{align}

Hence, we have that
\begin{align}
\mathbb{E}[Z_T] & \leq \sigma \cdot n^{1/p} \inf_{a_1,a_2,\ldots,a_n\in \mathbb{R}} \left( \sum_{i = 1}^n P_T(i)|1-a_i|^q + (1-P_T(i))|a_i|^q \right)^{1/q}.
\end{align}

It follows from Lemma~\ref{lemma.optimization} that
\begin{align}
\mathbb{E}[Z_T] & \leq \sigma \cdot n^{1/p} \left( \sum_{i = 1}^n \left( P_T(i)^{1/(1-q)} + (1-P_T(i))^{1/(1-q)} \right)^{1-q} \right)^{1/q}.
\end{align}
\end{proof}

\subsection{Discussions}

We now show that the upper bound is at most $\sigma \cdot \psi^{-1}(n)$. Choosing $a_i = 0, 1\leq i\leq n$, then for any $t>0$,
\begin{align}
\mathbb{E}[Z_T] & \leq \sigma \cdot \sum_{i = 1}^n  \frac{1 + \mathbb{E}[\psi^*(t|P_{T|\mathbf{Z}}(i|\mathbf{Z})|)]}{t} \\
& = \sigma \frac{1}{t} \left( n + \mathbb{E}\left[\sum_{i = 1}^n \psi^*(t|P_{T|\mathbf{Z}}(i|\mathbf{Z})|)\right] \right).
\end{align}
Since $\psi^*(x),x\geq 0$ is a convex function, and $\sum_{i = 1}^n t|P_{T|\mathbf{Z}}(i|\mathbf{Z})| \leq t$, we know that it holds pointwise that
\begin{align}
\sum_{i = 1}^n \psi^*(t|P_{T|\mathbf{Z}}(i|\mathbf{Z})|) & \leq \psi^*(t).
\end{align}
Hence, we have
\begin{align}
\mathbb{E}[Z_T] & \leq \sigma \inf_{t>0} \frac{n + \psi^*(t)}{t} \\
& = \sigma \cdot \psi^{-1}(n).
\end{align}
\section{Acknowledgement}

We are grateful to Tiancheng Yu for insightful discussions and the help in the preparation of this manuscript. 

\appendix
\section{Auxiliary lemmas} \label{sec.auxiliary}

\begin{lemma}[Generalized H\"{o}lder's Inequality]\cite{hudzik2000amemiya}
Denote an Orlicz function by $\psi$ and its convex conjugate by $\psi^* = \sup\{uv - \psi(v): v\geq 0\}$. Then,
\begin{align}
\mathbb{E}[XY] \leq \norm{ X }_\psi \norm{ Y }_{\psi^*}^A.
\end{align}
\end{lemma}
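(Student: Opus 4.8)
The plan is to bound $\mathbb{E}[Z_T/\sigma]$ directly by a change-of-measure argument, apply the generalized H\"older inequality term by term, and then specialize to $\psi(x)=x^p$. First I would condition on the selection index, writing $\mathbb{E}[Z_T/\sigma] = \sum_{i=1}^n P_T(i)\mathbb{E}[Z_i/\sigma \mid T=i]$, and rewrite each conditional expectation as an integral against $P_{Z_i}$ weighted by the likelihood ratio $dP_{Z_i|T=i}/dP_{Z_i}$. The decisive move is to subtract an arbitrary constant $b_i$ from this likelihood ratio: because $\mathbb{E}[Z_i]=0$, the term $\int b_i (x/\sigma)\,P_{Z_i}(dx)$ vanishes identically, so the value is unchanged for every choice of $b_i$. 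Setting $a_i=b_i P_T(i)$ and recognizing $P_{T|Z_i}(i\mid x)$ as the joint-to-marginal ratio, I would bound the integrand by its absolute value to obtain $\mathbb{E}[Z_T/\sigma]\le \sum_i \int |P_{T|Z_i}(i\mid x)-a_i|\,(|x|/\sigma)\,P_{Z_i}(dx)$. This centering is exactly what forces the final bound to vanish when $T$ is deterministic.

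The second step applies the generalized H\"older inequality $\mathbb{E}[XY]\le \|X\|_\psi\|Y\|_{\psi^*}^A$ to each summand with $X=Z_i/\sigma$ (so $\|X\|_\psi\le 1$ by hypothesis) and $Y=|P_{T|Z_i}(i\mid Z_i)-a_i|$, yielding $\sum_i \inf_{a_i}\|P_{T|Z_i}(i\mid Z_i)-a_i\|_{\psi^*}^A$. To sharpen this to the full vector $\mathbf{Z}$, I would pass from $P_{T|Z_i}$ to $P_{T|\mathbf{Z}}$ using the tower property $\mathbb{E}[P_{T|\mathbf{Z}}(i\mid \mathbf{Z})\mid Z_i]=P_{T|Z_i}(i\mid Z_i)$ together with convexity of $x\mapsto\psi^*(t|x-a_i|)$: Jensen's inequality inside the Amemiya functional gives $\|P_{T|Z_i}(i\mid Z_i)-a_i\|_{\psi^*}^A\le \|P_{T|\mathbf{Z}}(i\mid \mathbf{Z})-a_i\|_{\psi^*}^A$. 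Since the same chain applies to $-Z_T$, this establishes the first displayed bound with absolute values. The marginal-only bound then follows by expanding the Amemiya norm at a fixed $t>0$ and using the same convexity on $[0,1]$, namely $\psi^*(t|x-a|)\le x\psi^*(t|1-a|)+(1-x)\psi^*(t|a|)$, after which $\mathbb{E}[P_{T|\mathbf{Z}}(i\mid\mathbf{Z})]=P_T(i)$ removes all dependence on the joint law.

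For the $p$-norm specialization I would use that for $\psi(x)=x^p$ the Amemiya norm $\|\cdot\|_{\psi^*}^A$ coincides with the ordinary $L^q$ norm, $1/p+1/q=1$. The factor $n^{1/p}$ is extracted by concavity of $x\mapsto x^{1/q}$: writing $\sum_i\|\cdot\|_q = n\sum_i \tfrac1n(\mathbb{E}|\cdot|^q)^{1/q}\le n(\tfrac1n\sum_i\mathbb{E}|\cdot|^q)^{1/q}=n^{1/p}(\sum_i\mathbb{E}|\cdot|^q)^{1/q}$. Applying the convexity bound $|x-a|^q\le x|1-a|^q+(1-x)|a|^q$ for $x\in[0,1]$ and taking expectations reduces each term to $P_T(i)|1-a_i|^q+(1-P_T(i))|a_i|^q$. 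The last ingredient is a scalar optimization over each $a_i$; invoking Lemma~\ref{lemma.optimization} to minimize $p|1-a|^q+(1-p)|a|^q$ in closed form produces $(P_T(i)^{1/(1-q)}+(1-P_T(i))^{1/(1-q)})^{1-q}$ for each index, and the $1/q$-th power of the sum of these over $i$ is precisely $H(T;q)$ from~(\ref{eqn.hqdef}); the boundary cases $q=1$ and $q=\infty$ are handled directly from~(\ref{eqn.generalmarginalt}).

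I expect the main obstacle to be the conditioning-refinement step, i.e.\ justifying that replacing $P_{T|Z_i}$ by $P_{T|\mathbf{Z}}$ can only increase the Amemiya norm. This requires simultaneously the martingale identity and the convexity of the Amemiya functional in its argument through $\psi^*$, and care is needed because the infimum over $t$ defining $\|\cdot\|_{\psi^*}^A$ must be controlled uniformly; the cleanest route is to prove the inequality at each fixed $t$ before taking the infimum, which is the order I would follow.
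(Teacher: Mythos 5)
Your proposal does not address the statement at issue. The statement to be proved is the Generalized H\"older's Inequality itself, namely $\mathbb{E}[XY]\le \norm{X}_\psi\norm{Y}_{\psi^*}^A$ for an Orlicz function $\psi$, its convex conjugate $\psi^*$, the Luxemburg norm on the left factor and the Amemiya norm on the right factor. What you have written instead is a proof sketch of Theorem~\ref{thm.main}: the change-of-measure decomposition of $\mathbb{E}[Z_T/\sigma]$, the centering by $a_i$, the refinement from $P_{T|Z_i}$ to $P_{T|\mathbf{Z}}$, and the $p$-norm specialization. Worse, in your second paragraph you explicitly \emph{invoke} the inequality $\mathbb{E}[XY]\le\norm{X}_\psi\norm{Y}_{\psi^*}^A$ as a known tool; as a proof of that very inequality this is circular. (The paper itself gives no proof of this lemma --- it is cited from the reference on Amemiya norms --- so there is nothing in your text that could serve as the required argument.)

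The missing content is short and worth knowing. By homogeneity one may assume $\norm{X}_\psi\le 1$, so that $\mathbb{E}[\psi(|X|)]\le 1$ (using convexity and the definition of the Luxemburg norm as an infimum). For any $t>0$, the Fenchel--Young inequality gives pointwise
\begin{align}
|X|\cdot t|Y| \le \psi(|X|) + \psi^*(t|Y|),
\end{align}
hence
\begin{align}
\mathbb{E}[XY] \le \mathbb{E}|XY| \le \frac{\mathbb{E}[\psi(|X|)] + \mathbb{E}[\psi^*(t|Y|)]}{t} \le \frac{1+\mathbb{E}[\psi^*(t|Y|)]}{t}.
\end{align}
Taking the infimum over $t>0$ yields $\mathbb{E}[XY]\le\norm{Y}_{\psi^*}^A$, and rescaling gives the stated bound. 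This is the step your proposal needed to supply and did not.
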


\begin{lemma}\label{lemma.optimization}
For fixed $a\in [0,1], q\geq 1, q\in \mathbb{R}$, we have
\begin{align}
\min_{x\in [0,1]} a (1-x)^q + (1-a)x^q & = \begin{cases} \left( a^{1/(1-q)} + (1-a)^{1/(1-q)} \right)^{1-q} & q > 1 \\ \min\{a, 1-a\} & q = 1 \end{cases}
\end{align}
\end{lemma}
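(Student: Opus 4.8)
The plan is to treat the objective $f(x) = a(1-x)^q + (1-a)x^q$ as a one-variable convex minimization over $[0,1]$ and split into the two regimes $q=1$ and $q>1$.

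First I would dispose of the case $q=1$. Here $f(x) = a(1-x) + (1-a)x = a + (1-2a)x$ is affine in $x$, so its minimum over $[0,1]$ is attained at an endpoint: at $x=0$ when $a \le 1/2$ and at $x=1$ when $a \ge 1/2$. In either case the minimal value equals $\min\{a, 1-a\}$, which is the claimed expression.

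Next, for $q>1$ I would first record that $f$ is strictly convex on $[0,1]$, being a nonnegatively weighted sum of the strictly convex functions $x \mapsto (1-x)^q$ and $x \mapsto x^q$; hence $f$ has a unique minimizer. The boundary values $a \in \{0,1\}$ are immediate: e.g. for $a=0$ we have $f(x)=x^q$, minimized at $x=0$ with value $0$, and the stated formula also returns $0$ under the convention $0^{1/(1-q)} = \infty$ and $\infty^{1-q}=0$ (valid since $1-q<0$). For $a \in (0,1)$ the function $f$ is differentiable on $[0,1]$ (using $q>1$ for differentiability of $x^q$ at the endpoint), and I would locate the minimizer by setting
\[
f'(x) = q\big[(1-a)x^{q-1} - a(1-x)^{q-1}\big] = 0,
\]
which rearranges to $(x/(1-x))^{q-1} = a/(1-a)$, giving the interior critical point $x^\ast = r/(1+r)$ with $r = (a/(1-a))^{1/(q-1)} \in (0,\infty)$, so $x^\ast \in (0,1)$.

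The main work is the algebraic simplification of $f(x^\ast)$. I would substitute $1-x^\ast = 1/(1+r)$ and $x^\ast = r/(1+r)$ to obtain $f(x^\ast) = (a + (1-a)r^q)/(1+r)^q$, then exploit the stationarity identity $(1-a)r^{q-1}=a$ (equivalently $(1-a)r^q = ar$) to collapse the numerator to $a(1+r)$, yielding $f(x^\ast) = a/(1+r)^{q-1}$. Writing $1+r = (a^{1/(q-1)} + (1-a)^{1/(q-1)})/(1-a)^{1/(q-1)}$ and raising to the power $q-1$ gives $f(x^\ast) = a(1-a)/(a^{1/(q-1)} + (1-a)^{1/(q-1)})^{q-1}$. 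A final rewriting using $1/(1-q) = -1/(q-1)$ converts this into $(a^{1/(1-q)} + (1-a)^{1/(1-q)})^{1-q}$, matching the claim (which can be checked by factoring $(a(1-a))^{1/(1-q)}$ out of the sum inside the bracket). I expect this last simplification to be the only delicate step, mostly in bookkeeping the exponents; no genuine obstacle arises, since strict convexity guarantees that the unique critical point is the global minimum.
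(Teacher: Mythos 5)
Your proposal is correct and follows essentially the same route as the paper: differentiate $f(x)=a(1-x)^q+(1-a)x^q$, solve $f'(x^\ast)=0$ to get $x^\ast/(1-x^\ast)=(a/(1-a))^{1/(q-1)}$, and substitute back, with the $q=1$ case handled separately. Your version is somewhat more careful than the paper's (explicit strict-convexity justification, explicit treatment of $a\in\{0,1\}$, and the full algebraic simplification of $f(x^\ast)$ that the paper compresses into ``which implies''), but these are refinements of the same argument rather than a different one.
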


\begin{proof}
Introduce
\begin{align}
f(x) & = a (1-x)^q + (1-a)x^q.
\end{align}
Taking derivative on both sides with respect to $x$, we have
\begin{align}
f'(x) & = -aq(1-x)^{q-1} + (1-a) q x^{q-1}.
\end{align}

From now on we only consider $q >1$, since it is clear that
\begin{align}
\lim_{q\to 1^+} \left( a^{1/(1-q)} + (1-a)^{1/(1-q)} \right)^{1-q} & = \min\{a,1-a\} \\
& = \min_{x\in [0,1]} a(1-x) + (1-a)x.
\end{align}

For any $q>1$, $f(x)$ is monotonically decreasing for any $x\leq x^*$, and then it is monotonically increasing for $x\geq x^*$. It attains the minimum when $x = x^*$, where $f'(x^*) = 0$.

Solving $f'(x^*) = 0$, we obtain that
\begin{align}
\frac{x^*}{1-x^*} & = \left( \frac{a}{1-a} \right)^{1/(q-1)},
\end{align}
which implies that
\begin{align}
f(x^*) & = \left( a^{1/(1-q)} + (1-a)^{1/(1-q)} \right)^{1-q}.
\end{align}
\end{proof}

\section{Proofs of classical maximal inequalities}\label{sec.classicalproof}

\subsection{Proof of Lemma~\ref{lemma.mgfhard}}
We have the following chain of inequalities. For any $\lambda \in [0,b)$,
\begin{align}
e^{\lambda \mathbb{E}[\max_i Z_i]} & \leq \mathbb{E}[e^{\lambda \max_i Z_i}] \\
& = \mathbb{E}[\max_i e^{\lambda Z_i}] \\
& \leq \sum_{i = 1}^n \mathbb{E}[e^{\lambda Z_i}] \\
& \leq n \cdot e^{\psi(\lambda)}.
\end{align}
Taking logarithm on both sides, we have
\begin{align}
\mathbb{E}[\max_i Z_i] & \leq \inf_{\lambda \in (0,b)} \left( \frac{\ln n + \psi(\lambda)}{\lambda} \right) \\
& = \psi^{*-1}(\ln n),
\end{align}
where in the last step we have used the fact that
\begin{align}
\psi^{*-1}(y) & = \inf_{\lambda \in (0,b)} \left( \frac{y + \psi(\lambda)}{\lambda} \right)
\end{align}
as shown in~\cite[Lemma 2.4, Pg 32]{boucheron2013concentration}.

\subsection{Proof of Lemma~\ref{lemma.orliczhard}}

We have the following chain of inequalities:
\begin{align}
\psi \left( \mathbb{E}\left[ \max_i \frac{|Z_i|}{\sigma} \right] \right) & \leq \mathbb{E}\left[ \psi\left( \max_i \frac{|Z_i|}{\sigma} \right) \right] \\
& \leq \sum_{i = 1}^n \mathbb{E}\left[ \psi \left( \frac{|Z_i|}{\sigma} \right) \right] \\
& \leq n.
\end{align}
Hence,
\begin{align}
\mathbb{E}[\max_i Z_i] & \leq \mathbb{E}[\max_i |Z_i|] \\
& \leq \sigma \cdot \psi^{-1}(n),
\end{align}
where in the last step we used the fact that an Orlicz function is nondecreasing.

\bibliographystyle{IEEEtran}
\bibliography{di}
\end{document}